\newtheorem{Definition}{Definition}
\newtheorem{Theorem}{Theorem}
\newtheorem{Proposition}{Proposition}
\newtheorem{Lemma}{Lemma}
\begin{document}

\title{$p$-order Tensor Products with Invertible Linear Transforms}
\author{Jun Han \thanks{School of Mathematical Sciences, Nankai University, Tianjin, China. 1610021@mail.nankai.edu.cn.}}
\date{\today}

\maketitle
\begin{abstract}
  This paper studies the issues about tensors. Three typical kinds of tensor decomposition are mentioned. Among these decompositions, the t-SVD is proposed in this decade. Different definitions of rank derive from tensor decompositions. Based on the research about higher order tensor t-product and tensor products with invertible transform, this paper introduces a product performing higher order tensor products with invertible transform, which is the most generalized case so far. Also, a few properties are proven. Because the optimization model of low-rank recovery often uses the nuclear norm, the paper tries to generalize the nuclear norm and proves its relation to multi-rank of tensors. The theorem paves the way for low-rank recovery of higher order tensors in the future.
\end{abstract}

\section{Introduction}

Nowadays, tensor-valued high dimensional data can be often observed in many areas, such as seismic data \cite{ely20155d},hyperspectral image \cite{chang2017weighted}, and video de-noising \cite{zhang2014novel}. Matrix-based methods may damage the inherent structure of data and hide some connection inside data by vectorizing or matricizing. So it is necessary for us to do research about tensors. There are three typical kinds of tensor decompositions, i.e., CP \cite{hitchcock1927expression,kiers2000towards}, Tucker decomposition \cite{tucker1966some,kruskal1989rank} and t-SVD \cite{kilmer2011factorization}. We refer the readers to \cite{kolda2009tensor} for a thorough review of CP and Tucker decomposition. The rank derived by CP decomposition is NP hard to compute \cite{haastad1989tensor}. Tucker decomposition may not give the best approximation of tensors. In this decade, there is a new tensor product called t-product \cite{kilmer2011factorization}. Most of conclusions in the matrix case can be generalized to tensors using this product. We can derive a generalized matrix SVD in the case of tensor based on t-product. Authors in \cite{kilmer2013third} analyzed this product in an operator perspective and proposed more concepts in a generalized version. A special case of $p$-order tensors ($p>3$) product is given \cite{martin2013order}.They mainly use the concept of matrix slices. The generalized t-product is given for the third order tensor based on any invertible transform \cite{kernfeld2015tensor}. Now we see many applications of this product \cite{ely20155d,chang2017weighted,jiang2019robust}. This is the most natural kind of product corresponding to matrix cases. In this paper, I generalized this product to the case of $p$-order tensors with invertible linear transforms. To the best of my knowledge, this is the most generalized case explicitly presented so far. Moreover, different definitions of tensor ranks, derived from tensor decompositions, facilitate the research about low-rank recovery. The nature of low-rank means that data such as images,videos, texts, all lie on low-dimensional subspaces \cite{belkin2003laplacian,eckart1936approximation}. In the matrix case, there are many efficient methods about low-rank recovery like \cite{li2013compressed}. Among these methods, convex programming with the matrix nuclear norm is very popular.
I summarize the main contribution of the paper as follows,

$\bullet$ I propose a $p$-order tensor product using linear invertible transforms.I extend and generalize the work \cite{kernfeld2015tensor,martin2013order}. I also give some related definitions and prove some properties.

$\bullet$ I give a definition of $p$-order tensor nuclear norm and prove a theorem to show the reasonableness of this norm, which is a generalization of \cite{song2019robust}.We can use it to do research about $p$-order tensor low-rank recovery in the future.

\subsection{Notation}
In this paper, the vector is denoted like $\mathbf{a}$ and its $i$-th element is like $a_i$. A matrix, which is also a second order tensor, is like $\mathbf{A}$, whose element is like $\mathbf{A}_{i,j}$. I denote the symbol like $\mathcal{A}$ as three or higher order tensors. There are three kinds of slices in third order tensors: horizontal slice $\mathcal{A}(i,:,:)$,lateral slice$\mathcal{A}(:,i,:)$, and frontal slice $\mathcal{A}(:,:,i)$. I will also denote the frontal slice as $\mathcal{A}^{(i)}$ for simplicity. By fixing two indices of third order tensors, we get the fiber. The mode-3 fiber is also called tube, denoted as $\mathcal{A}(i,j,:)$. In this paper, I will denote $\mathring{\mathbf{a}}$ as a tube of tensor $\mathcal{A}$. We may vectorize a tube by $\mathbf{a}=\mathrm{vec}(\mathring{\mathbf{a}})$. The lateral slice of a tensor is like $\vec{\boldsymbol{b}}$. For $p$ order tensors, the $(i_1,i_2,\cdots,i_p)$-th element of $\mathcal{A}$ is denoted as $\mathcal{A}_{i_1,i_2,\cdots,i_p}$. Here I would like to review the matrix and tensor norms. The matrix spectral norm of $||\mathbf{A}||$ is $\max_i \sigma_i(\mathbf{A})$ and nuclear norm $||\mathbf{A}||_*$ is $\sum_i \sigma_i(\mathbf{A})$. $\ell_1$ norm of a tensor $||\mathcal{A}||_1$ is $\sum_{ijk}|\mathcal{A}_{i,j,k}|$ and $\ell_{\infty}$ norm is denoted as $||\mathcal{A}||_{\infty}=\max_{ijk}|\mathcal{A}_{i,j,k}|$.
I may use the $m$-mode matrix product to define the generalized tensor product in this paper.
\begin{Definition}
  \cite{kolda2009tensor}
  $\mathcal{A}\in \mathbb{C}^{n_1 \times n_2 \times \cdots\times n_p}$,$\mathbf{X}\in \mathbb{C}^{J \times n_m}$
  \[
  (\mathcal{A}\times_m \mathbf{X})_{i_1,\cdots,i_{n-1},j,i_{n+1},\cdots,i_p}=\sum_{i_m=1}^{n_m}
  \mathcal{A}_{i_1,i_2,\cdots,i_p}\mathbf{X}_{j,i_m},
  \]
\end{Definition}

\subsection{Review of t-product}
The t-product is mainly based on discrete Fourier transform(DFT)\cite{kilmer2011factorization}.
We denote $\hat{\mathcal{A}}$ as the DFT of every tube of $\mathring{\mathbf{a}}$.
Using the MATLAB operation, we can get $\hat{\mathcal{A}}$ by $\mathtt{fft}(\mathcal{A},[ ],3)$.
Specifically, we exert the DFT on $\mathrm{vec}(\mathring{\mathbf{a}})$.
The formal definition of t-product for third order tensors is as follows,
\begin{Definition}
  \cite{kilmer2011factorization}For $\mathcal{A}\in \mathbb{R}^{n_1 \times n_2\times n_3},\mathcal{B}\in \mathbb{R}^{n_2 \times n_4\times n_3}$,
  \[
  \mathcal{A} *\mathcal{B}=\mathtt{fold}(\mathtt{bcirc}(\mathcal{A})  \mathtt{MatVec}(\mathcal{B})).
  \]
  We get a tensor in $\mathbb{R}^{n_1 \times n_4\times n_3}$.
  \end{Definition}
  Here note that
  \[
  \mathtt{bcirc}(\mathcal{A})=
  \left(
  \begin{array}{ccccc}
    \mathcal{A}^{(1)} & \mathcal{A}^{(n_3)} & \mathcal{A}^{(n_3-1)} & \cdots & \mathcal{A}^{(2)} \\
    \mathcal{A}^{(2)} & \mathcal{A}^{(1)} & \mathcal{A}^{(n_3)} & \cdots & \mathcal{A}^{(3)}\\
    \vdots & \ddots & \ddots & \ddots & \vdots \\
    \mathcal{A}^{(n_3)} & \mathcal{A}^{(n_3-1)} & \cdots & \mathcal{A}^{(2)} & \mathcal{A}^{(1)}
  \end{array}
  \right),
  \mathtt{MatVec}(\mathcal{A})=
  \left(
  \begin{array}{c}
    \mathcal{A}^{(1)} \\
    \mathcal{A}^{(2)}\\
    \vdots \\
    \mathcal{A}^{(n_3)}
  \end{array}
  \right),
  \]
 and $\mathtt{fold}$ is the inverse operation of $\mathtt{MatVec}$.
$\mathtt{bcirc}(\mathtt{MatVec}(\mathcal{A}))$ can be diagonalized in block,
   \begin{equation}\label{bdiagdft}
(\mathbf{F}_{n_3}\otimes\mathbf{I}_{n_1})\mathtt{circ}(\mathtt{MatVec}
(\mathcal{A}))(\mathbf{F}_{n_3}^H\otimes\mathbf{I}_{n_2})=
\mathtt{bdiag}(\hat{\mathcal{A}}),
\end{equation}
where $\otimes$ is the Kronecker product.
The block diagonal elements of $\mathtt{bdiag}(\hat{\mathcal{A}})$ are the frontal slices of $\hat{\mathcal{A}}$.
We can find that the t-product can be computed more efficiently than CP and Tucker decomposition.
Definitions of transpose of tensors and orthogonal tensors are given. Note that we need to reverse the order of the frontal slices except the first when transposing.
\begin{Definition}
  \cite{kilmer2011factorization}
  The transpose of $\mathcal{A}\in \mathbb{R}^{n_1 \times n_2\times n_3}$ is $\mathcal{A}^\top\in \mathbb{R}^{n_2 \times n_1\times n_3}$, which satisfies
  \[
  (\mathcal{A}^\top)^{(1)}=(\mathcal{A}^{(1)})^\top,(\mathcal{A}^\top)^{(i)}=
  (\mathcal{A}^{(n_3+2-i)})^\top,i=2,3,\cdots,n_3.
  \]
\end{Definition}
\begin{Definition}
  \cite{kilmer2011factorization}$\mathcal{Q}\in \mathbb{R}^{n \times n\times n_3}$ is the orthogonal tensor if
  $\mathcal{Q}^\top *\mathcal{Q}=\mathcal{Q} *\mathcal{Q}^\top=\mathcal{I}$, where $\mathcal{I}$ is the identity tensor,
  \[
  \mathcal{I}^{(1)}=\mathbf{I}_n,\mathcal{I}^{(i)}=\mathbf{O},i=2,3,\cdots,n_3,
  \]
  $\mathbf{I}_n$ is the identity matrix. All elements of $\mathbf{O}$ are zero.
\end{Definition}
From the definition above, we can also know the definition of identity tensors. Diagonal tensors should be different from \cite{kolda2009tensor} to derive the theorem of t-SVD.
\begin{Definition}
  \cite{kilmer2011factorization}$\mathcal{A}\in \mathbb{R}^{n_1 \times n_2\times n_3}$ is f-diagonal if all frontal slices of $\mathcal{A}^{(i)}$ is diagonal.
\end{Definition}
\begin{Theorem}
  \cite{kilmer2011factorization}
  Given $\mathcal{A}\in \mathbb{R}^{n_1 \times n_2\times n_3}$, there exist orthogonal tensors
  $\mathcal{U}\in \mathbb{R}^{n_1 \times n_1\times n_3},\mathcal{V}\in \mathbb{R}^{n_2 \times n_2\times n_3}$,and a f-diagonal tensor $\mathcal{S}\in \mathbb{R}^{n_1 \times n_2\times n_3}$ subject to $\mathcal{A}=\mathcal{U}*\mathcal{S}*\mathcal{V}^\top$.
\end{Theorem}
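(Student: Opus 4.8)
The plan is to pass to the Fourier domain, where the t-product becomes a facewise (block-diagonal) matrix product, perform an ordinary matrix SVD on every frontal slice there, and then transform back. First I would apply the DFT along the third mode to obtain $\hat{\mathcal{A}}$; by the block-diagonalization identity \eqref{bdiagdft}, the operator $\mathtt{bcirc}(\mathcal{A})$ is conjugated into $\mathtt{bdiag}(\hat{\mathcal{A}})$, so that $\mathcal{A}*\mathcal{B}$ corresponds exactly to the facewise products $\hat{\mathcal{A}}^{(i)}\hat{\mathcal{B}}^{(i)}$ for $i=1,\ldots,n_3$. Under this correspondence the time-domain transpose defined above turns into the facewise conjugate transpose, i.e. $\widehat{\mathcal{A}^\top}^{(i)}=(\hat{\mathcal{A}}^{(i)})^H$; the reversal of frontal slices built into the definition of $\mathcal{A}^\top$ is precisely what the DFT needs in order to realize conjugation. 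I would verify this correspondence first, since every subsequent identity is read off from it.

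Next, for each $i$ I would take the ordinary complex matrix SVD $\hat{\mathcal{A}}^{(i)}=\hat{U}^{(i)}\hat{S}^{(i)}(\hat{V}^{(i)})^H$, with $\hat{U}^{(i)},\hat{V}^{(i)}$ unitary and $\hat{S}^{(i)}$ nonnegative diagonal, and stack these slices into Fourier-domain tensors $\hat{\mathcal{U}},\hat{\mathcal{S}},\hat{\mathcal{V}}$. Applying the inverse DFT produces the candidates $\mathcal{U},\mathcal{S},\mathcal{V}$. Orthogonality of $\mathcal{U}$ and $\mathcal{V}$ then follows because the facewise unitarity $(\hat{U}^{(i)})^H\hat{U}^{(i)}=\mathbf{I}$ translates back, through the transpose correspondence above, into $\mathcal{U}^\top*\mathcal{U}=\mathcal{U}*\mathcal{U}^\top=\mathcal{I}$. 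The f-diagonality of $\mathcal{S}$ is immediate: the inverse DFT acts tubewise, so the off-diagonal tubes that vanish in the Fourier domain stay zero, and every frontal slice of $\mathcal{S}$ remains diagonal. Finally, multiplying the three factors facewise reproduces $\hat{\mathcal{A}}^{(i)}$ for each $i$, hence $\mathcal{A}=\mathcal{U}*\mathcal{S}*\mathcal{V}^\top$ after transforming back.

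The main obstacle I anticipate is guaranteeing that $\mathcal{U},\mathcal{S},\mathcal{V}$ come out real, since the whole construction is carried out over $\mathbb{C}$. Because $\mathcal{A}$ is real, $\hat{\mathcal{A}}$ inherits the conjugate symmetry $\hat{\mathcal{A}}^{(i)}=\overline{\hat{\mathcal{A}}^{(n_3+2-i)}}$ for $i=2,\ldots,n_3$, with $\hat{\mathcal{A}}^{(1)}$ real. To force the inverse transform back into $\mathbb{R}$, I would not choose the SVDs of the conjugate-symmetric slice pairs independently: I would fix one SVD of each pair freely and then set $\hat{U}^{(n_3+2-i)}=\overline{\hat{U}^{(i)}}$, $\hat{S}^{(n_3+2-i)}=\hat{S}^{(i)}$, and $\hat{V}^{(n_3+2-i)}=\overline{\hat{V}^{(i)}}$. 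This choice reproduces the same conjugate symmetry, which is exactly the condition under which the inverse DFT of each tube is real-valued. The delicate bookkeeping is the treatment of the self-conjugate slices, namely $i=1$ and, when $n_3$ is even, $i=n_3/2+1$, where the slice is real and one must select a genuinely real SVD. Once this symmetry is maintained throughout, the reality of $\mathcal{U},\mathcal{S},\mathcal{V}$ and all three defining properties follow simultaneously.
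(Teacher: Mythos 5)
Your proposal is correct and follows essentially the same route the paper itself uses when it proves the generalized version (Theorem \ref{p,tSVD}): transform along the third mode, take an ordinary matrix SVD of each frontal slice in the transform domain, stack, invert the transform, and verify orthogonality and f-diagonality facewise via the correspondence $\widehat{\mathcal{A}^\top}^{(i)}=(\hat{\mathcal{A}}^{(i)})^H$. Your extra care in enforcing the conjugate symmetry $\hat{U}^{(n_3+2-i)}=\overline{\hat{U}^{(i)}}$, $\hat{S}^{(n_3+2-i)}=\hat{S}^{(i)}$, $\hat{V}^{(n_3+2-i)}=\overline{\hat{V}^{(i)}}$ (and choosing real SVDs on the self-conjugate slices) is a genuinely necessary step for the real-valued third-order statement that the paper's complex-valued argument does not need and therefore omits.
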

So the t-SVD is the generalization of the matrix SVD.
\begin{Definition}
  \cite{kilmer2013third}
  The multi-rank of $\mathcal{A}\in \mathbb{C}^{n_1\times n_2\times n_3}$ is a vector given by
  \[
  \mathbf{rank}_m(\mathcal{A})=
  (\mathrm{rank}(\hat{\mathcal{A}}^{(1)}),\mathrm{rank}
  (\hat{\mathcal{A}}^{(2)}),
  \cdots,\mathrm{rank}(\hat{\mathcal{A}}^{(n_3)}))^\top.
  \]
\end{Definition}
$\ell_1$ and $\ell_2$ norm of multi-rank may describe the complexity or sparsity of tensors. I will give its generalization in Section 3.
\begin{Definition}
  \cite{zhang2014novel,lu2019tensor}
  The tubal-rank of $\mathcal{A}\in \mathbb{R}^{n_1\times n_2\times n_3}$ is the number of non-zero tubes of $\mathcal{S}$ where $\mathcal{A}=\mathcal{U}*\mathcal{S}*\mathcal{V}^\top$. We can denote as
  \[
  \mathrm{rank}_t(\mathcal{A})=\#\{i:\mathrm{vec}(\mathcal{S}(i,i,:))\neq \vec{0}\}=\#\{i:\mathcal{S}(i,i,1)\neq 0\}.
  \]
\end{Definition}
To define the t-product under invertible linear transform, we must give the triangle operator. Consider $\mathcal{A}\in\mathbb{C}^{m\times l\times n},\mathcal{B}\in\mathbb{C}^{l\times q\times n}$,
\begin{Definition}
  \cite{kernfeld2015tensor}The i-th frontal slice of $\mathcal{A}\triangle\mathcal{B}$ is $\mathcal{A}^{(i)}\mathcal{B}^{(i)}$.
\end{Definition}
\begin{Definition}
  \cite{kernfeld2015tensor}
  If $L:\mathbb{C}^{1\times l\times n}\rightarrow\mathbb{C}^{1\times l\times n}$ is an invertible linear transform,
  define $\mathrm{vec}(L(\mathring{\mathbf{a}}))=\mathbf{T}\mathbf{a}$, where $\mathbf{T}$ is the corresponding matrix of transform $L$.
\end{Definition}
We need to perform the above operation in every tube of tensors. Here we give the definition of tensor products with invertible linear transforms.
\begin{Definition}
  \cite{kernfeld2015tensor}$*_L$ product is as follows, $L(\mathcal{A}*_L\mathcal{B})=L(\mathcal{A})\triangle L(\mathcal{B})$.
\end{Definition}
One special case of $*_L$ is t-product.
\begin{Definition}
  \cite{zhang2016exact,semerci2014tensor}The tensor spectral norm and tensor nulcear norm are defined as
  \[
  \|\mathcal{A}\|=\|\mathtt{bdiag}(\hat{\mathcal{A}})\|,\quad
  \|
   \mathcal{A}\|_*=\sum_{i=1}^{n_3}\|\hat{\mathcal{A}}^{(i)}\|.
  \]
\end{Definition}
The convex envelope is often used in the low-rank recovery model because of non-convexity of tensor ranks. For example, one theorem about the convex envelope of the tensor rank is given in \cite{lu2019tensor}.
Because in the next section I will give the generalized product for $p$-order tensors, I will not give too many details of the previous work.

\section{$p$-order Tensors $*_L$}
\begin{figure}
  \centering
  \includegraphics[width=12cm]{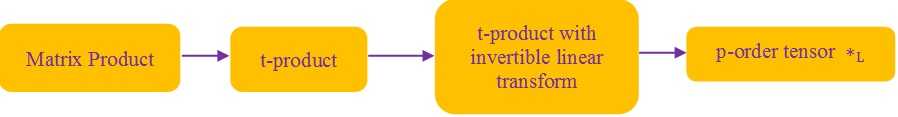}
  \caption{Relationship between Different Products}\label{product}
\end{figure}
I use the idea of \cite{martin2013order} and also introduce the matrix slices of $p$-order tensors. We can see the Figure \ref{product} showing the relationship between different products. The product on the right is the generalization of that on the left.
I denote the matrix slice of $\mathcal{A}$ as $\mathcal{A}^{'}$, a third order tensor.
We will discuss $p$-order tensors $\mathcal{A}\in \mathbb{C}^{n_1\times l\times \cdots\times n_p}$ and $\mathcal{B}\in \mathbb{C}^{l\times n_2\times \cdots\times n_p}$.
First define $\triangle_p$ as a generalized case of $\triangle$,
\begin{Definition}
  \[
  (\mathcal{A}\triangle_p \mathcal{B})^{'(i)}=\mathcal{A^{'}}^{(i)}
  \mathcal{B^{'}}^{(i)},i=1,2,\cdots,n_3 n_4\cdots n_p.
  \]
\end{Definition}
Now I can introduce the $p$-order tensor product with invertible linear transform,
\begin{Definition}
Given invertible transform $L$, i.e. $L(\mathcal{A})=A\times_3 \mathbf{T}_{n_3}\times_4 \mathbf{T}_{n_4}\cdots\times_{p} \mathbf{T}_{n_p}$,define the product as $L(\mathcal{A}*_L\mathcal{B})=L(\mathcal{A})\triangle_p
L(\mathcal{B})$.
In particular, if $L$ is a unitary transform, we denote the product as $*_u$.
$\mathbf{T} _{n_i},i=3,4,\cdots,n_p$ is the corresponding matrix of transform $L$.
\end{Definition}
As \cite{martin2013order} shows that
\begin{equation}\label{bdiagdftnd}
(\mathbf{F}_{n_p}\otimes\mathbf{F}_{n_{p-1}}\otimes
\cdots\otimes\mathbf{F}_{n_3}\otimes \mathbf{I}_{n_1})
\cdot\tilde{\mathbf{A}}\cdot
(\mathbf{F}_{n_p}^{(-1)}\otimes\mathbf{F}_{n_{p-1}}^{(-1)}\otimes
\cdots\otimes\mathbf{F}_{n_3}^{(-1)}\otimes \mathbf{I}_{n_2}),
\end{equation}
we can diagonalize $\tilde{\mathbf{A}}$ in blocks. We may perform many times of $\mathtt{MatVec}$ to obtain $\tilde{\mathbf{A}}$ from $p$-order tensors. The diagonal block is like $(L(\mathcal{A}))^{'(i)}$. Note that $L^{-1}(\mathcal{A})=A\times_{p} \mathbf{T}_{n_p}^{-1}\times_{n_{p-1}} \mathbf{T}_{n_{p-1}}^{-1}\cdots\times_{3} \mathbf{T}_{3}^{-1}$.
I also give an algorithm to compute the product of $p$-order tensors,
\begin{algorithm}[H]
\caption{Calculate $*_L$ product of $p$-order tensors}
\label{alg1}
\begin{algorithmic}[1]
\REQUIRE $\mathcal{A}\in \mathbb{R}^{n_1\times l\times \cdots\times n_p}$,$\mathcal{B}\in \mathbb{R}^{l\times n_2\times \cdots\times n_p}$
\ENSURE $\mathcal{C}\in \mathbb{R}^{n_1\times n_2\times \cdots\times n_p}$
\FOR{$i=3,\cdots,p$}
\STATE$\mathcal{A}=\mathcal{A}\times_i \mathbf{T}_{n_i};\mathcal{B}=\mathcal{B}\times_i \mathbf{T}_{n_i};$
\ENDFOR
\FOR{$i=1,2,\cdots,n_3 n_4 \cdots n_p$}
\STATE$\mathcal{C}^{'(i)}=\mathcal{A}^{'(i)}\mathcal{B}^{'(i)};$
\ENDFOR
\FOR{$i=p,p-1,\cdots,3$}
\STATE$\mathcal{C}=\mathcal{C}\times_i \mathbf{T}_{n_i}^{-1}$
\ENDFOR
\end{algorithmic}
\end{algorithm}
I give a generalized definition of identity tensors,invertible tensors,
\begin{Definition}
  $\mathcal{I}\in \mathbb{C}^{n\times n\times n_3 \cdots\times n_p}$ satisfying
  $L(\mathcal{I})^{'(i)}=\mathbf{I}_n,i=1,2,\cdots,n_3\cdots n_p$ is an identity tensor.
\end{Definition}
It is easy to verify that
\begin{equation}\label{identityt}
L(\mathcal{I})\triangle_p L(\mathcal{A})=
L(\mathcal{A})\triangle_p L(\mathcal{I})=L(\mathcal{A}).
\end{equation}
Then we get $
\mathcal{A}*_L\mathcal{I}=\mathcal{I}*_L\mathcal{A}=\mathcal{A}$.
\begin{Definition}
  $\mathcal{A}\in \mathbb{C}^{n\times n\times n_3\times \cdots\times n_p}$ is invertible if there exists $\mathcal{A}^{-1}\in\mathbb{C}^{n\times n\times n_3\times \cdots\times n_p}$ subject to
  \[
  \mathcal{A}*_L\mathcal{A} ^{-1}=\mathcal{A}^{-1}*_L\mathcal{A}=\mathcal{I}.
  \]
\end{Definition}

Some theoretical results can be shown,
\begin{Lemma}\label{jiehe}
  The $p$-order tensors is associative under $*_L$.
\end{Lemma}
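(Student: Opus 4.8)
The plan is to leverage that $L$ is an invertible linear transform, so that to prove $(\mathcal{A}*_L\mathcal{B})*_L\mathcal{C}=\mathcal{A}*_L(\mathcal{B}*_L\mathcal{C})$ it suffices to prove that the two sides have the same image under $L$. First I would apply $L$ to the left-associated product and unfold the defining relation $L(\mathcal{X}*_L\mathcal{Y})=L(\mathcal{X})\triangle_p L(\mathcal{Y})$ twice. Setting $\mathcal{D}=\mathcal{A}*_L\mathcal{B}$, one has $L(\mathcal{D})=L(\mathcal{A})\triangle_p L(\mathcal{B})$, and then
\[
L\big((\mathcal{A}*_L\mathcal{B})*_L\mathcal{C}\big)=L(\mathcal{D})\triangle_p L(\mathcal{C})=\big(L(\mathcal{A})\triangle_p L(\mathcal{B})\big)\triangle_p L(\mathcal{C}).
\]
Performing the same unfolding on the right-associated product yields
\[
L\big(\mathcal{A}*_L(\mathcal{B}*_L\mathcal{C})\big)=L(\mathcal{A})\triangle_p\big(L(\mathcal{B})\triangle_p L(\mathcal{C})\big).
\]
Hence the entire claim reduces to establishing that $\triangle_p$ is associative.

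Next I would verify the associativity of $\triangle_p$ one matrix slice at a time. By its definition, $\triangle_p$ acts on each matrix-slice index $i=1,2,\cdots,n_3 n_4\cdots n_p$ as ordinary matrix multiplication, $(\mathcal{X}\triangle_p\mathcal{Y})^{'(i)}=\mathcal{X}^{'(i)}\mathcal{Y}^{'(i)}$. Abbreviating $X_i=(L(\mathcal{A}))^{'(i)}$, $Y_i=(L(\mathcal{B}))^{'(i)}$, and $Z_i=(L(\mathcal{C}))^{'(i)}$, the $i$-th slices of the two expressions above are $(X_i Y_i)Z_i$ and $X_i(Y_i Z_i)$, which agree by associativity of matrix multiplication. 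Since the two $L$-images coincide on every matrix slice, they are equal as tensors, and applying the inverse transform $L^{-1}$ recovers the desired identity.

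The hard part will not be any deep argument but rather the bookkeeping that makes each step legitimate. I must check dimensional compatibility so that every slice-wise product is defined: with $\mathcal{A}\in\mathbb{C}^{n_1\times l\times\cdots\times n_p}$ and $\mathcal{B}\in\mathbb{C}^{l\times n_2\times\cdots\times n_p}$ the slices $X_i,Y_i$ multiply legitimately, and a matching hypothesis linking $\mathcal{B}$ and $\mathcal{C}$ makes the triple product well-posed. I also need the observation that the mode products $\times_3,\cdots,\times_p$ defining $L$ act along distinct modes and therefore commute with the slice extraction $(\cdot)^{'(i)}$, which is exactly what justifies unfolding $L(\mathcal{X}*_L\mathcal{Y})$ one slice at a time. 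Granting these routine points, the result is a direct transfer of matrix-multiplication associativity through the invertible transform $L$.
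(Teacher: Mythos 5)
Your proposal is correct and follows essentially the same route as the paper: apply $L$ to both associations, reduce the claim to associativity of $\triangle_p$, and undo with $L^{-1}$. The only difference is that you explicitly justify the associativity of $\triangle_p$ slice by slice via matrix multiplication, a step the paper's chain of equalities leaves implicit.
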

\begin{proof}
  \begin{equation}\label{ptensorjiehe}
\begin{split}
   (\mathcal{A}*_L\mathcal{B})*_L\mathcal{C} &= L^{-1}(L(\mathcal{A})\triangle_p L(\mathcal{B}))*_L \mathcal{C}\\
     &=L^{-1}(L(\mathcal{A})\triangle_p L(\mathcal{B}))*_L \mathcal{C}\\
     &=L^{-1}(L(L^{-1}(L(\mathcal{A})\triangle_p L(\mathcal{B})))\triangle_p L(\mathcal{C})) \\
     &=L^{-1}((L(\mathcal{A})\triangle_p L(\mathcal{B}))\triangle_p L(\mathcal{C})) \\
     &=L^{-1}(L(\mathcal{A})\triangle_p L(L^{-1}(L(\mathcal{B})\triangle_p L(\mathcal{C})))) \\
     &=L^{-1}(L(\mathcal{A})\triangle_p (L(\mathcal{B})*_L \mathcal{C}))\\
     &=\mathcal{A}*_L (\mathcal{B}*_L \mathcal{C}).
\end{split}
\end{equation}
\end{proof}
Tubes of $p$-order tensors are in $\mathbb{C}^{1\times 1\times n_3\times \cdots\times n_p}$,also denoted as $\mathring{\mathbf{a}}$.
\begin{Theorem}
  The set of all tubes in $\mathbb{C}^{1\times 1\times n_3\times \cdots\times n_p}$ form a commutative ring under $*_L$.
\end{Theorem}
\begin{proof}
  From the definition of $*_L$, we can show
\begin{equation}\label{jiaohuan}
\begin{split}
   \mathring{\mathbf{a}}*_L(\mathring{\mathbf{b}}+\mathring{\mathbf{c}}) &=L^{-1}(L(\mathring{\mathbf{a}})\triangle_p L(\mathring{\mathbf{b}}+\mathring{\mathbf{c}}))\\
     &=L^{-1}(L(\mathring{\mathbf{a}})\triangle_p L(\mathring{\mathbf{b}})+L(\mathring{\mathbf{a}})\triangle_p
  L(\mathring{\mathbf{c}})) \\
     &=L^{-1}(L(\mathring{\mathbf{a}})\triangle_p L(\mathring{\mathbf{b}}))+L^{-1}(L(\mathring{\mathbf{a}})\triangle_p
  L(\mathring{\mathbf{c}})) \\
     &=\mathring{\mathbf{a}}*_L \mathring{\mathbf{b}}+\mathring{\mathbf{a}}*_L \mathring{\mathbf{c}}.
\end{split}
\end{equation}
The proof of $(\mathring{\mathbf{a}}+\mathring{\mathbf{b}})*_L\mathring{\mathbf{c}}=
\mathring{\mathbf{a}}*_L\mathring{\mathbf{c}}+\mathring{\mathbf{b}}*_L
\mathring{\mathbf{c}}$ is similar to the above.
We also know that
\begin{equation}\label{jiaohuanring}
  \mathring{\mathbf{a}}*_L\mathring{\mathbf{b}}=L^{-1}(L(\mathring{\mathbf{a}})
\triangle_p L(\mathring{\mathbf{b}}))=L^{-1}(L(\mathring{\mathbf{b}})
\triangle_p L(\mathring{\mathbf{a}}))=\mathring{\mathbf{a}}*_L\mathring{\mathbf{b}}.
\end{equation}
\end{proof}

\begin{Definition}[Transpose]
  $\mathcal{A}^H\in \mathbb{C}^{l\times n_1\times n_3\times\cdots\times n_p}$ is the transpose of $\mathcal{A}$ if $L(\mathcal{A} ^H)^{'(i)}=(L(\mathcal{A})^{'(i)})^H$.
\end{Definition}
The following propositions shows that the transpose of tensors keeps some properties of transposing matrices.
\begin{Proposition}
  $(\mathcal{A}*_L\mathcal{B})^H=\mathcal{B}^H*_L\mathcal{A}^H$
\end{Proposition}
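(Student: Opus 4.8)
The plan is to reduce everything to a slicewise matrix identity by applying the invertible transform $L$ to both sides. Since $L$ is a bijection on the relevant tensor space, the identity $(\mathcal{A}*_L\mathcal{B})^H=\mathcal{B}^H*_L\mathcal{A}^H$ holds if and only if $L$ of the left-hand side equals $L$ of the right-hand side, and this in turn holds if and only if the corresponding matrix slices ${}^{'(i)}$ agree for every $i=1,2,\cdots,n_3 n_4\cdots n_p$. So I would first state that it suffices to verify the equality slice by slice after transforming.

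For the left-hand side, I would unwind the definitions in order. Applying the transpose definition to the tensor $\mathcal{A}*_L\mathcal{B}$ gives $L\big((\mathcal{A}*_L\mathcal{B})^H\big)^{'(i)}=\big(L(\mathcal{A}*_L\mathcal{B})^{'(i)}\big)^H$. Then the definition of $*_L$ together with $\triangle_p$ yields $L(\mathcal{A}*_L\mathcal{B})^{'(i)}=L(\mathcal{A})^{'(i)}\,L(\mathcal{B})^{'(i)}$. The key algebraic step is now the ordinary matrix identity $(XY)^H=Y^HX^H$ applied slicewise, which reverses the factors and produces $\big(L(\mathcal{B})^{'(i)}\big)^H\big(L(\mathcal{A})^{'(i)}\big)^H$. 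For the right-hand side I would run the definitions the other way: the $*_L$ and $\triangle_p$ definitions give $L(\mathcal{B}^H*_L\mathcal{A}^H)^{'(i)}=L(\mathcal{B}^H)^{'(i)}\,L(\mathcal{A}^H)^{'(i)}$, and the transpose definition applied to $\mathcal{A}$ and $\mathcal{B}$ separately rewrites each factor as $\big(L(\mathcal{B})^{'(i)}\big)^H$ and $\big(L(\mathcal{A})^{'(i)}\big)^H$. The two expressions coincide, so the slices match for all $i$, and invertibility of $L$ finishes the argument.

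There is no genuine obstacle here; the content is entirely in correctly chaining the three definitions (transpose, $*_L$, $\triangle_p$) and in the observation that the matrix-level order reversal $(XY)^H=Y^HX^H$ is exactly what converts $\mathcal{A}^H$ appearing first on the left into $\mathcal{A}^H$ appearing second on the right. The one point worth stating carefully is the justification that the matrix-slice description is faithful, i.e.\ that two tensors agreeing on all $L$-transformed matrix slices are equal; this is immediate from the invertibility of $L$ and the fact that a tensor is determined by its collection of matrix slices, but it is the hinge that makes the slicewise computation legitimate.
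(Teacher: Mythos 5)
Your proposal is correct and follows essentially the same route as the paper: both reduce the claim to the matrix slices of the $L$-transformed tensors, chain the definitions of transpose, $*_L$, and $\triangle_p$, and invoke the matrix identity $(XY)^H=Y^HX^H$ before concluding by invertibility of $L$. The only cosmetic difference is that you compute both sides and meet in the middle, while the paper writes a single chain of equalities from $L(\mathcal{B}^H*_L\mathcal{A}^H)^{'(i)}$ to $(L(\mathcal{A}*_L\mathcal{B})^{'(i)})^H$.
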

\begin{proof}
  \begin{equation}\label{ptensorH}
  \begin{split}
     L(\mathcal{B}^H*_L\mathcal{A}^H)^{'(i)}&=(L(\mathcal{B}^H)\triangle_p L(\mathcal{A}^H))^{(i)} \\
       &=L(\mathcal{B}^H)^{'(i)}L(\mathcal{A}^H)^{'(i)} \\
       &=(L(\mathcal{B})^{'(i)})^H (L(\mathcal{A})^{'(i)})^H \\
       &=(L(\mathcal{A})^{'(i)}L(\mathcal{B})^{'(i)})^H\\
       &=((L(\mathcal{A})\triangle_p L(\mathcal{B}))^{(i)})^H\\
       &=(L(\mathcal{A}*_L \mathcal{B})^{'(i)})^H.
  \end{split}
  \end{equation}
  So we get $L(\mathcal{B}^H*_L\mathcal{A}^H)=L((\mathcal{A}*_L \mathcal{B})^H)$.
\end{proof}
The definition of a unitary tensor is given here,
\begin{Definition}
  $\mathcal{Q}\in \mathbb{C}^{m\times m\times n_3\times \cdots\times n_p}$ is a unitary tensor if $\mathcal{Q}^H *_L\mathcal{Q}=\mathcal{Q}*_L\mathcal{Q}^H=\mathcal{I}$.
\end{Definition}
The following theorem is important.
\begin{Theorem}[$p$-order tensor t-SVD under $*_L$]
  There exists unitary tensors $\mathcal{U}\in\mathbb{C}^{n_1\times n_1\times n_3\times \cdots\times n_p}$,
  $\mathcal{V}\in \mathbb{C}^{l\times l\times n_3\times \cdots\times n_p}$ and $\mathcal{S}$ whose dimension is equal to that of $\mathcal{A}$ subject to $\mathcal{A}=\mathcal{U}*_L\mathcal{S}*_L\mathcal{V}^H$.
  Also, $\mathcal{S}_{i_1 i_2\cdots i_p}\neq 0$ if and only if $i_1=i_2$.
  \label{p,tSVD}
\end{Theorem}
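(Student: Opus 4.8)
The plan is to pass to the transform domain, where the $*_L$ product collapses into ordinary matrix multiplication on each matrix slice, apply the classical matrix SVD slicewise, and then transform back. Write $N = n_3 n_4 \cdots n_p$ for the number of matrix slices. Applying $L$ to $\mathcal{A}$ produces $L(\mathcal{A})$, whose slices $L(\mathcal{A})^{'(i)} \in \mathbb{C}^{n_1 \times l}$ for $i = 1, 2, \ldots, N$ are ordinary complex matrices. For each $i$ I would use the matrix SVD to write $L(\mathcal{A})^{'(i)} = U_i \Sigma_i V_i^H$, with $U_i \in \mathbb{C}^{n_1 \times n_1}$ and $V_i \in \mathbb{C}^{l \times l}$ unitary and $\Sigma_i \in \mathbb{C}^{n_1 \times l}$ a rectangular diagonal matrix of singular values.

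I would then define the factor tensors directly in the transform domain by prescribing their slices, setting $L(\mathcal{U})^{'(i)} = U_i$, $L(\mathcal{S})^{'(i)} = \Sigma_i$, and $L(\mathcal{V})^{'(i)} = V_i$ for every $i$, and recovering $\mathcal{U}, \mathcal{S}, \mathcal{V}$ by applying $L^{-1}$; invertibility of $L$ makes these tensors well defined. The factorization is then verified one slice at a time: by the definitions of $*_L$ and of the transpose, $L(\mathcal{U} *_L \mathcal{S} *_L \mathcal{V}^H)^{'(i)} = U_i \Sigma_i (L(\mathcal{V})^{'(i)})^H = U_i \Sigma_i V_i^H = L(\mathcal{A})^{'(i)}$ for all $i$, so injectivity of $L$ gives $\mathcal{A} = \mathcal{U} *_L \mathcal{S} *_L \mathcal{V}^H$. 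Unitarity is just as direct: $L(\mathcal{U}^H *_L \mathcal{U})^{'(i)} = U_i^H U_i = \mathbf{I}_{n_1} = L(\mathcal{I})^{'(i)}$ for each $i$, whence $\mathcal{U}^H *_L \mathcal{U} = \mathcal{I}$, and the remaining three unitarity identities for $\mathcal{U}$ and $\mathcal{V}$ follow in the same way.

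The delicate point, and the step I expect to be the main obstacle, is the support claim on $\mathcal{S}$. In the transform domain each $\Sigma_i$ is rectangular diagonal, so $L(\mathcal{S})_{i_1 i_2 \cdots i_p} = 0$ whenever $i_1 \neq i_2$; the task is to show that this pattern survives the inverse transform. The key observation is that $L$, and hence $L^{-1}$, acts only along modes $3, \ldots, p$ through $\times_3 \mathbf{T}_{n_3} \cdots \times_p \mathbf{T}_{n_p}$, so it transforms each tube $\mathcal{S}(i_1, i_2, :, \ldots, :)$ into the corresponding tube of $L(\mathcal{S})$ without ever mixing different positions $(i_1, i_2)$. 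Because $L$ is invertible on tubes, a tube vanishes before the transform exactly when its image vanishes after it; since every off-diagonal tube of $L(\mathcal{S})$ is zero, so is every off-diagonal tube of $\mathcal{S}$, giving $\mathcal{S}_{i_1 i_2 \cdots i_p} = 0$ whenever $i_1 \neq i_2$. The care required here is in making the commutation between the mode-wise transforms and the slice-extraction operation $(\cdot)^{'(i)}$ precise, so that diagonality in the transform domain transfers to the stated f-diagonal support condition in the original domain.
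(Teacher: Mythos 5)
Your proposal is correct and follows essentially the same route as the paper's proof: apply $L$, take the matrix SVD of each slice $L(\mathcal{A})^{'(i)}$, pull the factors back via $L^{-1}$, and verify unitarity slice by slice. You are in fact somewhat more thorough than the paper, which omits both the slicewise verification of the factorization identity itself and any argument for the support claim on $\mathcal{S}$ --- your observation that $L^{-1}$ acts only along modes $3,\dots,p$ and therefore cannot mix $(i_1,i_2)$ positions is exactly the missing justification that off-diagonal tubes of $\mathcal{S}$ vanish.
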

\begin{proof}
  Let $\hat{\mathcal{A}}=L(\mathcal{A})$. Compute the matrix SVD of
  $\hat{\mathcal{A}} ^{'(i)}$, i.e.
  \[
  \hat{\mathcal{A}} ^{'(i)}=\hat{\mathcal{U}}^{'(i)}\hat{\mathcal{S}}^{'(i)}
  (\hat{\mathcal{V}}^{'(i)})^H.
   \]
   Let $\mathcal{U}=L^{-1}(\hat{\mathcal{U}})$.
  $\mathcal{S}=L^{-1}(\hat{\mathcal{S}})$ and $\mathcal{V}=
  L^{-1}(\hat{\mathcal{V}})$. We need to prove that $\mathcal{U}$ is a unitary tensor. The proof of $\mathcal{V}$ is similar.
  \begin{equation}\label{ortho}
  \begin{split}
  L(\mathcal{U}*_L\mathcal{U} ^H)^{'(i)}&=(L(\mathcal{U})\triangle_p L(\mathcal{U}^H))^{(i)}=L(\mathcal{U})^{'(i)}L(\mathcal{U^H})^{'(i)}\\
  &=\hat{\mathcal{U}}^{'(i)}(\hat{\mathcal{U}}^{'(i)})^H
  =\mathbf{I}_{n_1}=
  L(\mathcal{I})^{'(i)},
  \end{split}
  \end{equation}
  So $\mathcal{U}*_L\mathcal{U}^H=\mathcal{I}$ and the conclusion follows.
\end{proof}

If $\mathcal{C}=\mathcal{A}^H*_L\mathcal{A}$, $L(\mathcal{C})^{'(i)}$ is positive semidefinite because
\begin{equation}\label{semidefinite}
  L(\mathcal{C})^{'(i)}=L(\mathcal{A}^H)^{'(i)}L(\mathcal{A})^{'(i)}=
(L(\mathcal{A})^{'(i)})^H L(\mathcal{A})^{'(i)}.
\end{equation}

The tensor determinant can be defined but now it is not a number.
\begin{Definition}[Tensor Determinant]
  The determinant, which is in
  $\mathbb{C}^{1\times 1\times n_3\times \cdots \times n_p}$, of $\mathcal{A}\in \mathbb{C}^{2\times 2\times n_3\times \cdots \times n_p}$ can be defined as $\mathtt{det}(\mathcal{A})=|\mathcal{A}|=\mathring{\mathbf{a}}_{11}
  *_L\mathring{\mathbf{a}}_{22}-\mathring{\mathbf{a}}_{12}
  *_L\mathring{\mathbf{a}}_{21}$ where $\mathring{\mathbf{a}} _{ij}=\mathcal{A}(i,j,:,\cdots,:)$.
  We can define the determinant of
  $\mathcal{A}\in \mathbb{C}^{3\times 3\times n_3\times \cdots \times n_p}$ by unfolding the rows,
  \[
  |\mathcal{A}|=\mathring{\mathbf{a}}_{11}
  *_L
  \begin{vmatrix}
    \mathring{\mathbf{a}}_{22} & \mathring{\mathbf{a}}_{23} \\
    \mathring{\mathbf{a}}_{32} & \mathring{\mathbf{a}}_{33}
  \end{vmatrix}
  -\mathring{\mathbf{a}}_{12}
  *_L
  \begin{vmatrix}
    \mathring{\mathbf{a}}_{21} & \mathring{\mathbf{a}}_{23} \\
    \mathring{\mathbf{a}}_{31} & \mathring{\mathbf{a}}_{33}
  \end{vmatrix}
  +\mathring{\mathbf{a}}_{13}
  *_L
  \begin{vmatrix}
    \mathring{\mathbf{a}}_{21} & \mathring{\mathbf{a}}_{22} \\
    \mathring{\mathbf{a}}_{31} & \mathring{\mathbf{a}}_{32}
  \end{vmatrix}.
  \]
  By recursion, we can define the determinant of $\mathcal{A}\in \mathbb{C}^{n\times n\times n_3\times \cdots \times n_p}$ now,
  \[
  \mathtt{det}(\mathcal{A})=\sum_ {j=1}^{n}(-1)^{j-1}\mathring{\mathbf{a}}_{1j}*_L
  \mathtt{det}(\mathcal{M}_{1j}),
  \]
  where $\mathcal{M}_{1j}$ is a cofactor tensor.
\end{Definition}

We give an easy way to compute the determinant,
\begin{Theorem}\label{det}
  To compute the tensor determinant, we can first compute the determinant of $L(\mathcal{A})^{'(i)}$, denoted by $\mathring{\mathbf{d}}^{'(i)}$.
  Then compute $L^{-1}(\mathring{\mathbf{d}})$.
\end{Theorem}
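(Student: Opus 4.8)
The plan is to reduce the tensor determinant, via the transform $L$, to ordinary scalar determinants taken slice-by-slice. Concretely, I would prove the identity
$L(\mathtt{det}(\mathcal{A}))^{'(i)}=\det(L(\mathcal{A})^{'(i)})$
for every $i=1,\dots,n_3\cdots n_p$. Since the theorem defines $\mathring{\mathbf{d}}^{'(i)}=\det(L(\mathcal{A})^{'(i)})$, this identity says $L(\mathtt{det}(\mathcal{A}))=\mathring{\mathbf{d}}$, whence $\mathtt{det}(\mathcal{A})=L^{-1}(\mathring{\mathbf{d}})$, which is exactly the claim.

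Two elementary observations drive the argument. First, for tubes $\mathring{\mathbf{a}},\mathring{\mathbf{b}}\in\mathbb{C}^{1\times 1\times n_3\times\cdots\times n_p}$ the matrix slices $L(\cdot)^{'(i)}$ are $1\times 1$, so $\triangle_p$ collapses to ordinary scalar multiplication; combined with the defining relation $L(\mathring{\mathbf{a}}*_L\mathring{\mathbf{b}})=L(\mathring{\mathbf{a}})\triangle_p L(\mathring{\mathbf{b}})$ this yields $L(\mathring{\mathbf{a}}*_L\mathring{\mathbf{b}})^{'(i)}=L(\mathring{\mathbf{a}})^{'(i)}\,L(\mathring{\mathbf{b}})^{'(i)}$. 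Second, the transform $L$, which acts as $\mathcal{A}\mapsto\mathcal{A}\times_3\mathbf{T}_{n_3}\cdots\times_p\mathbf{T}_{n_p}$, touches none of the first two indices, whereas forming a cofactor tensor $\mathcal{M}_{1j}$ deletes a row and a column in modes $1,2$. These operations commute, so $L(\mathcal{M}_{1j})^{'(i)}$ is exactly the matrix obtained from $L(\mathcal{A})^{'(i)}$ by deleting its first row and $j$-th column, and likewise $L(\mathring{\mathbf{a}}_{1j})^{'(i)}=(L(\mathcal{A})^{'(i)})_{1,j}$.

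With these in hand I would argue by induction on the size $n$ of the leading two modes. The base case $n=1$ is immediate, since there $\mathtt{det}(\mathcal{A})=\mathcal{A}$ and the $1\times 1$ slice $L(\mathcal{A})^{'(i)}$ is its own determinant. For the inductive step I apply $L$ to the recursive definition $\mathtt{det}(\mathcal{A})=\sum_{j=1}^{n}(-1)^{j-1}\mathring{\mathbf{a}}_{1j}*_L\mathtt{det}(\mathcal{M}_{1j})$, take the $i$-th slice, and use linearity of $L$ together with the product identity to reach $\sum_{j=1}^{n}(-1)^{j-1}(L(\mathcal{A})^{'(i)})_{1,j}\,L(\mathtt{det}(\mathcal{M}_{1j}))^{'(i)}$. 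The induction hypothesis applied to each $(n-1)\times(n-1)$ cofactor tensor $\mathcal{M}_{1j}$ replaces $L(\mathtt{det}(\mathcal{M}_{1j}))^{'(i)}$ by $\det(L(\mathcal{M}_{1j})^{'(i)})$, which by the commutation observation is the $(1,j)$-minor of $L(\mathcal{A})^{'(i)}$. The resulting expression is precisely the Laplace expansion of $\det(L(\mathcal{A})^{'(i)})$ along its first row, closing the induction.

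The step I expect to require the most care is the commutation claim of the second paragraph: that building the cofactor tensor by deleting a row and column in modes $1,2$, then applying $L$ and extracting the $i$-th matrix slice, returns the genuine minor of $L(\mathcal{A})^{'(i)}$. This is the bookkeeping heart of the proof, and it rests entirely on the fact that $L$ operates only on the tube modes $3,\dots,p$ while the cofactor construction operates only on modes $1,2$, so the two are independent. Once this is made explicit, the remainder is a routine transfer of the classical cofactor expansion through the invertible transform.
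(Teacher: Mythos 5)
Your proposal is correct and follows essentially the same route as the paper: induction on $n$ through the recursive cofactor definition, pushing $L$ through the expansion by linearity, using that $\triangle_p$ acts slice-wise (and is scalar multiplication on tubes) to recognize the Laplace expansion of $\det(L(\mathcal{A})^{'(i)})$. Your explicit treatment of the commutation between $L$ (acting on modes $3,\dots,p$) and the cofactor extraction (acting on modes $1,2$) is a point the paper leaves implicit, but the argument is the same.
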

\begin{proof}
  If $\mathcal{A}\in \mathbb{C}^{2\times 2\times n_3\times \cdots \times n_p}$,
  \begin{equation}\label{detproof1}
  \begin{split}
     \mathtt{det}(\mathcal{A})&=L^{-1}(L(\mathring{\mathbf{a}}_{11})\triangle_p L(\mathring{\mathbf{a}}_{22}))-L^ {-1}(L(\mathring{\mathbf{a}}_{21})\triangle_p L(\mathring{\mathbf{a}}_{12})) \\
       &=L^ {-1}(L(\mathring{\mathbf{a}}_{11})\triangle_p L(\mathring{\mathbf{a}}_{22})-L(\mathring{\mathbf{a}}_{21})\triangle_p L(\mathring{\mathbf{a}}_{12}))
  \end{split}
  \end{equation}
  Note that $L(\mathring{\mathbf{a}}_{11})^{'(i)} L(\mathring{\mathbf{a}}_{22})^{'(i)}-L(\mathring{\mathbf{a}}_{12})^{'(i)} L(\mathring{\mathbf{a}}_{21})^{'(i)}$ is the matrix determinant of $L(\mathcal{A})^{'(i)}$. If $n\geq 3$, the case of $\mathbb{C}^{(n-1)\times (n-1)\times n_3\times \cdots \times n_p}$ is right. For $\mathcal{A}\in \mathbb{C}^{n\times n\times n_3\times \cdots \times n_p}$, let the cofactor tensor of $\mathcal{A}$ be $\mathcal{M}_{1j}$ and the matrix determinant of $L(\mathcal{M}_{1j})^{'(i)}$ be $\mathring{\mathbf{d}}_{1j}^{'(i)}$. Then
  \begin{equation}\label{detproof2}
  \begin{split}
    L(\mathtt{det}(\mathcal{A}))&=\sum_ {j=1}^{n}(-1)^{j-1}L(\mathring{\mathbf{a}}_{1j})\triangle_p
  L(\mathtt{det}(\mathcal{M}_{1j}) \\
    &=\sum_ {j=1}^{n}(-1)^{j-1}L(\mathring{\mathbf{a}}_{1j})\triangle_p
  \mathring{\mathbf{d}}_{1j}
  \end{split}
  \end{equation}
  So we get
  \begin{equation}\label{dettens}
    L(\mathtt{det}(\mathcal{A}))^{'(i)}=\sum_ {j=1}^{n}(-1)^{j-1}L(\mathring{\mathbf{a}}_{1j})^{'(i)}
  \mathring{\mathbf{d}}_{1j}^{'(i)}=\mathring{\mathbf{d}}^{'(i)}
  \end{equation}
  Then $\mathtt{det}(\mathcal{A})=L^{-1}(\mathring{\mathbf{d}})$ and the conclusion follows.
\end{proof}

\begin{Theorem}
  $\mathtt{det}(\mathcal{A}*_L\mathcal{B})=\mathtt{det}(\mathcal{A})*_L
  \mathtt{det}(\mathcal{B})$, where $\mathcal{A}$, $\mathcal{B}\in\mathbb{C} ^{n\times n\times n_3\times \cdots \times n_p}$.
\end{Theorem}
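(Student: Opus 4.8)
The plan is to transport the identity through the invertible transform $L$ and reduce it, slice by slice, to the classical multiplicativity of the ordinary matrix determinant. Since $L$ is a bijection, proving $\mathtt{det}(\mathcal{A}*_L\mathcal{B})=\mathtt{det}(\mathcal{A})*_L\mathtt{det}(\mathcal{B})$ is equivalent to proving that the two tubes agree after applying $L$, and by the definition of $\triangle_p$ it then suffices to check equality of the $1\times 1$ matrix slices $L(\,\cdot\,)^{'(i)}$ for each $i=1,2,\cdots,n_3 n_4\cdots n_p$. This is precisely the setting in which Theorem~\ref{det} applies, so I would lean on it throughout.

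First I would expand the left-hand side. By Theorem~\ref{det}, $L(\mathtt{det}(\mathcal{A}*_L\mathcal{B}))^{'(i)}$ equals the ordinary matrix determinant $\det\!\big(L(\mathcal{A}*_L\mathcal{B})^{'(i)}\big)$. The definition of $*_L$ gives $L(\mathcal{A}*_L\mathcal{B})^{'(i)}=L(\mathcal{A})^{'(i)}L(\mathcal{B})^{'(i)}$, a genuine product of two $n\times n$ matrices, so this slice is $\det\!\big(L(\mathcal{A})^{'(i)}L(\mathcal{B})^{'(i)}\big)$. Next I would expand the right-hand side: since $\mathtt{det}(\mathcal{A})$ and $\mathtt{det}(\mathcal{B})$ are tubes, the definition of $*_L$ makes $L(\mathtt{det}(\mathcal{A})*_L\mathtt{det}(\mathcal{B}))^{'(i)}$ the scalar product $L(\mathtt{det}(\mathcal{A}))^{'(i)}\,L(\mathtt{det}(\mathcal{B}))^{'(i)}$, and Theorem~\ref{det} rewrites each factor as $\det\!\big(L(\mathcal{A})^{'(i)}\big)$ and $\det\!\big(L(\mathcal{B})^{'(i)}\big)$ respectively.

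At this point the two sides read $\det\!\big(L(\mathcal{A})^{'(i)}L(\mathcal{B})^{'(i)}\big)$ and $\det\!\big(L(\mathcal{A})^{'(i)}\big)\det\!\big(L(\mathcal{B})^{'(i)}\big)$, so the proof closes by invoking the classical matrix identity $\det(PQ)=\det(P)\det(Q)$ for square matrices and then applying $L^{-1}$ to recover the tensor equality. The real content of the argument is therefore already concentrated in Theorem~\ref{det}: the genuinely new step is the recognition that $L$ conjugates the tube-ring $*_L$ structure into $n_3 n_4\cdots n_p$ independent scalar channels, in each of which the recursively defined $\mathtt{det}$ collapses to the ordinary matrix determinant. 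I expect the main obstacle to be purely notational bookkeeping — keeping straight that a tensor determinant is a tube, that its matrix slices are scalars, and that $*_L$ on tubes acts slicewise as scalar multiplication — rather than any deep difficulty, since once the slicewise reduction is in place the statement inherits multiplicativity directly from the matrix case.
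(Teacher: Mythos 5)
Your proposal is correct and follows essentially the same route as the paper's own proof: both reduce the identity via Theorem~\ref{det} to the slicewise matrix determinants in the transform domain, use the slicewise action of $*_L$ (on tensors and on tubes), and close with the classical $\det(PQ)=\det(P)\det(Q)$. Your write-up is in fact slightly more careful than the paper's, which writes $\mathcal{A}^{'(i)}$ in places where the slices of $L(\mathcal{A})$ are meant.
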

\begin{proof}
  \begin{equation}\label{detproof3}
  \begin{split}
    \mathtt{det}(\mathcal{A})*_L
  \mathtt{det}(\mathcal{B})&= L^{-1}(L(L^{-1}(\mathring{\mathbf{d}}_1))\triangle_p L(L^{-1}(\mathring{\mathbf{d}}_2))) \\
    &=L^{-1}(\mathring{\mathbf{d}}_1\triangle_p \mathring{\mathbf{d}}_2)
  \end{split}
  \end{equation}
  where $\mathring{\mathbf{d}}_1$ and $\mathring{\mathbf{d}}_2$ are the tensors computed by the matrix determinant of $\mathcal{A}^{'(i)},\mathcal{B}^{'(i)}$ respectively. Let $\mathcal{C}=\mathcal{A}*_L\mathcal{B}$. Then $\mathcal{C}^{'(i)}=
  \mathcal{A}^{'(i)}\mathcal{B}^{'(i)}$. And we need to compute the determinant,
  $\mathtt{det}(\mathcal{C}^{'(i)})=
  \mathtt{det}(\mathcal{A}^{'(i)})\mathtt{det}(\mathcal{B}^{'(i)})$.
  We denote $\mathring{\mathbf{d}}_3$as the tensor computed by matrix determinant of $\mathcal{C}^{'(i)}$ so
  $\mathring{\mathbf{d}}_3=\mathring{\mathbf{d}}_1\triangle_p
  \mathring{\mathbf{d}}_2$. Then
  \begin{equation}\label{C=AB}
    \mathtt{det}(\mathcal{C})=L^ {-1}(\mathring{\mathbf{d}}_3)=\mathtt{det}(\mathcal{A})*_L
  \mathtt{det}(\mathcal{B}).
  \end{equation}
\end{proof}

\begin{Theorem}
\[
  \mathtt{det}(\mathcal{I})=L^{-1}(\mathbf{1}_{1\times 1\times n_3\times \cdots\times n_p}),
\]
where each element of $\mathbf{1}_{1\times 1\times n_3\times \cdots\times n_p}$ is 1.
\end{Theorem}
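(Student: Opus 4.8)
The plan is to apply Theorem~\ref{det} directly with $\mathcal{A}=\mathcal{I}$, so that the entire computation collapses to a single scalar determinant carried out in the transform domain. First I would recall the content of Theorem~\ref{det}: for any square tensor $\mathcal{A}$, the transform-domain matrix slice $L(\mathtt{det}(\mathcal{A}))^{'(i)}$ equals the ordinary matrix determinant of $L(\mathcal{A})^{'(i)}$, and $\mathtt{det}(\mathcal{A})=L^{-1}(\mathring{\mathbf{d}})$ where $\mathring{\mathbf{d}}^{'(i)}=\mathtt{det}(L(\mathcal{A})^{'(i)})$. This reduces the present claim to identifying the slices $\mathring{\mathbf{d}}^{'(i)}$ when $\mathcal{A}=\mathcal{I}$.

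Next I would invoke the definition of the identity tensor, which gives $L(\mathcal{I})^{'(i)}=\mathbf{I}_n$ for every $i=1,2,\ldots,n_3\cdots n_p$. Substituting this into the formula from Theorem~\ref{det}, each transform-domain slice of the determinant becomes
\[
\mathring{\mathbf{d}}^{'(i)}=\mathtt{det}(\mathbf{I}_n)=1.
\]
Since $\mathring{\mathbf{d}}$ lives in $\mathbb{C}^{1\times 1\times n_3\times\cdots\times n_p}$, its $1\times 1$ matrix slices are precisely these scalars, so $\mathring{\mathbf{d}}$ has every slice equal to $1$; that is, $\mathring{\mathbf{d}}=\mathbf{1}_{1\times 1\times n_3\times\cdots\times n_p}$. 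Applying $L^{-1}$ then yields $\mathtt{det}(\mathcal{I})=L^{-1}(\mathring{\mathbf{d}})=L^{-1}(\mathbf{1}_{1\times 1\times n_3\times\cdots\times n_p})$, which is the asserted identity.

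There is essentially no obstacle here beyond bookkeeping, since all the analytic work was already done in Theorem~\ref{det}. The only point deserving a moment of care is confirming that a $1\times 1\times n_3\times\cdots\times n_p$ object whose every matrix slice equals the scalar $1$ is exactly the all-ones tube $\mathbf{1}_{1\times 1\times n_3\times\cdots\times n_p}$, i.e. that ``slice-wise equal to $1$'' and ``every entry equal to $1$'' coincide for such a degenerate-dimension tensor; this is immediate because each slice is a single entry. Hence the theorem is inherited wholesale from Theorem~\ref{det} together with the definition of $\mathcal{I}$, and the proof is a two-line substitution.
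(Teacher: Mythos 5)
Your argument is exactly the paper's: both invoke Theorem~\ref{det} with $\mathcal{A}=\mathcal{I}$, use $L(\mathcal{I})^{'(i)}=\mathbf{I}_n$ to get $\mathring{\mathbf{d}}^{'(i)}=\mathtt{det}(\mathbf{I}_n)=1$, conclude $\mathring{\mathbf{d}}=\mathbf{1}_{1\times 1\times n_3\times\cdots\times n_p}$, and apply $L^{-1}$. The proposal is correct and, if anything, slightly more careful than the paper about why the all-ones identification holds for the degenerate $1\times 1$ slices.
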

\begin{proof}
  From the definition of identity tensors, we know that $\mathtt{det}(L(\mathcal{I})^{'(i)})=1$. Then $\mathring{\mathbf{d}}=\mathbf{1}_{1\times 1\times n_3\times \cdots\times n_p}$. From the Theorem \ref{det},
  \begin{equation}\label{detproof4}
    \mathtt{det}(\mathcal{I})=L^ {-1}(\mathring{\mathbf{d}})=L^ {-1}(\mathbf{1}_{1\times 1\times n_3\times \cdots\times n_p}).
  \end{equation}
\end{proof}

\section{Tensor Nuclear Norm and Multi-rank}
In this section I will give the definition of tensor nuclear norm and tensor multi-rank. Then I prove a theorem to show their relationship.

The multi-rank can be generalized.
\begin{Definition}
  The multi-rank of $\mathcal{A}$ is defined as
  \[
  \mathbf{rank}_m(\mathcal{A})=(\mathrm{rank}(\mathcal{A}^{'(1)}),
  \mathrm{rank}(\mathcal{A}^{'(2)}),\cdots,
  \mathrm{rank}(\mathcal{A}^{'(n_3\cdots n_p)})).
  \]
\end{Definition}
Given a linear invertible transform, we can define a tensor nuclear norm and spectral norm,
\begin{Definition}
  The tensor nuclear norm and the tensor spectral norm under $*_L$ are respectively
  \[
  \|\mathcal{A}\|_{*,L}=\sum_{i=1}^{n_3\cdots n_p}\|L(\mathcal{A})^{'(i)}\|_{*}, \|\mathcal{A}\|=\|\mathtt{bdiag}({L(\mathcal{A})}^{'})\|.
  \]
\end{Definition}
\begin{figure}
  \centering
  \includegraphics[width=12cm]{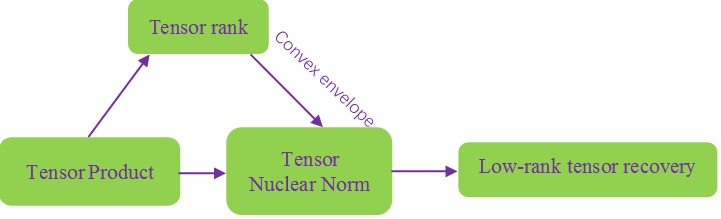}
  \caption{Overview of Low-Rank Tensor Recovery}\label{recovery}
\end{figure}
I briefly show the idea of low-rank tensor recovery in Figure \ref{recovery}.
We may use low-rank matrix recovery applying in unfolding tensors.
But it will lead to curse of dimensionality and damage the inherent structure.
We refer the readers to the recent work \cite{jiang2019robust,song2019robust} for details of low-rank tensor recovery models. Here I give a theorem to pave the way for the research about $p$-order tensor low-rank recovery.
\cite{song2019robust} gives a generalization under $*_u$.We need to use the fact that the tensor nuclear norm is the convex envelope of $\|\mathbf{rank}_m(\mathcal{A})\|_1$ to show the reasonableness of tensor nuclear norm including in the model.
Consider the biconjugate and prove a theorem which generalizes the work of \cite{song2019robust}.
\begin{Theorem}
  If $L$ is a unitary transform, $\|\mathcal{X}\|_{*,L}$ is the convex envelope of the $\ell_1$ norm of multi-rank on the set
  $\{\mathcal{X}|\|\mathcal{X}\|\leq 1\}$.
\end{Theorem}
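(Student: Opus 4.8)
The plan is to identify the convex envelope with the Fenchel biconjugate and then use the unitarity of $L$ to split the whole computation into $n_3\cdots n_p$ independent matrix problems, each of which is the classical fact that the matrix nuclear norm is the convex envelope of rank on the spectral-norm ball. First I would define the extended-valued function $f(\mathcal{X})=\|\mathbf{rank}_m(\mathcal{X})\|_1$ for $\|\mathcal{X}\|\le 1$ and $f(\mathcal{X})=+\infty$ otherwise, so that the convex envelope on the set $\{\mathcal{X}\mid\|\mathcal{X}\|\le 1\}$ is precisely the biconjugate $f^{**}$ restricted to that set. The goal is then to show $f^{**}(\mathcal{X})=\|\mathcal{X}\|_{*,L}$ there.

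The key structural observation is that a unitary $L$ turns every ingredient into a block-diagonal object in the transform domain. Since $L$ preserves the Frobenius inner product, I would write $\langle\mathcal{X},\mathcal{Y}\rangle=\langle L(\mathcal{X}),L(\mathcal{Y})\rangle=\sum_i\langle L(\mathcal{X})^{'(i)},L(\mathcal{Y})^{'(i)}\rangle$. The spectral-norm constraint $\|\mathcal{X}\|=\|\mathtt{bdiag}(L(\mathcal{X})^{'})\|\le 1$ is equivalent to $\max_i\|L(\mathcal{X})^{'(i)}\|\le 1$, i.e. each transform-domain slice lying in the matrix spectral ball; and both the objective $\sum_i\mathrm{rank}(L(\mathcal{X})^{'(i)})$ and the target $\sum_i\|L(\mathcal{X})^{'(i)}\|_*$ are sums over these independent slices. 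Consequently the supremum defining $f^*$ separates completely across the index $i$.

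Next I would carry out the conjugate computation slice by slice. For a single matrix slice I would evaluate $\sup_{\|X\|\le 1}\{\langle X,Y\rangle-\mathrm{rank}(X)\}$ by diagonalizing with the SVD and invoking von Neumann's trace inequality, which reduces the supremum to the singular values and yields the closed form $\sum_k(\sigma_k(Y)-1)_+$. Summing over $i$ gives $f^*(\mathcal{Y})=\sum_i\sum_k(\sigma_k(L(\mathcal{Y})^{'(i)})-1)_+$. Taking the conjugate once more, separating again over $i$, and applying the same singular-value reduction, the inner maximization over each $\sigma_k$ is attained exactly when $\sigma_k\le 1$; on the ball $\{\|\mathcal{X}\|\le 1\}$ this evaluates to $\sum_i\|L(\mathcal{X})^{'(i)}\|_*=\|\mathcal{X}\|_{*,L}$, which is the claim.

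The routine part here is the matrix-level conjugate, which is classical, so the genuinely load-bearing step is the decoupling itself: I must verify that unitarity of $L$ simultaneously block-diagonalizes the inner product, the spectral-norm constraint, the $\ell_1$-multi-rank objective, and the nuclear norm, so that the two Fenchel transforms commute with the direct sum over slices. This is also where I would reconcile the multi-rank with the transform domain, reading $\mathbf{rank}_m(\mathcal{X})$ through the slices $L(\mathcal{X})^{'(i)}$, since only then do the objective and the nuclear norm live on the same slices and normalization; a non-unitary $L$ would distort the inner product and break precisely this separation, which is why the hypothesis that $L$ is unitary is essential.
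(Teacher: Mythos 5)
Your proposal is correct, and it reaches the conclusion by a noticeably cleaner route than the paper's own proof. Both arguments identify the convex envelope with the Fenchel biconjugate and both reduce everything to singular values via von Neumann's trace inequality plus an SVD-alignment step, but the organization differs in a real way. The paper pools the singular values of all transform-domain slices into one globally sorted list $\sigma_{(i)}(L(\mathcal{Y}))$ and runs a single monolithic computation: it writes $\Upsilon^{\#}(\mathcal{Y})=\max_r\bigl(\sum_{i=1}^r\sigma_{(i)}(L(\mathcal{Y}))-r\bigr)$, then handles the biconjugate through a case analysis on $\|\mathcal{Z}\|$ and a correspondence $\sigma_{k(i)}$ between slice-wise and globally sorted singular values, ending in a somewhat delicate inequality chain. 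You instead observe that unitarity of $L$ makes the inner product, the spectral-ball constraint, the $\ell_1$ multi-rank, and the nuclear norm all decompose as direct sums over the slices $L(\mathcal{X})^{'(i)}$, so both Fenchel transforms factor into $n_3\cdots n_p$ independent copies of the classical matrix statement (the conjugate of rank on the spectral ball is $\sum_k(\sigma_k(Y)-1)_+$, whose conjugate is the nuclear norm on the ball). The two computations agree because $\max_r\bigl(\sum_{i=1}^r\sigma_{(i)}-r\bigr)=\sum_i(\sigma_{(i)}-1)_+$ is insensitive to how the singular values are grouped across slices. What your route buys is modularity and transparency: the global bookkeeping is replaced by a citation to the known matrix result, and the exact role of unitarity --- preserving the inner product so that the suprema separate --- is isolated as the one load-bearing step. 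What it costs is only self-containedness: you must cite or reprove the matrix-level fact, and you should state explicitly that the transform-domain slices range independently over their spectral balls (which follows from invertibility of $L$), since that is what licenses interchanging the supremum with the sum over slices.
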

\begin{proof}
  Let $\Upsilon(\mathcal{X})=\|\mathbf{rank} _m(\mathcal{X})\|_1$ and $n_{(2)}=\min(n_1,n_2)$. If
  $\|\mathcal{X}\|\leq 1$, the conjugate of $\Upsilon(\mathcal{X})$ on the set where spectral norm is 1 can be defined as,
  \begin{equation}\label{conjugatef}
    \Upsilon^{\#}(\mathcal{Y})=\sup_{\|\mathcal{X}\|\leq 1}(\mathrm{Re}(\langle \mathcal{Y},\mathcal{X}\rangle)-\|\mathbf{rank}_m(\mathcal{X})\|_1),
  \end{equation}
  Using von Neumann's trace inequality, we obtain
  \begin{equation}\label{vonneumann}
  \begin{split}
    \mathrm{Re}(\langle \mathcal{Y},\mathcal{X}\rangle) & =\mathrm{Re}(\langle L(\mathcal{Y}),L(\mathcal{X}\rangle))=
    \sum_{i=1}^{n_3\cdots n_p} \mathrm{Re}(\mathrm{tr}( (L(\mathcal{Y})^{'(i)})^H L(\mathcal{X}^{'(i)})))\\
     & \leq \sum_{i=1}^{n_3\cdots n_p}|\mathrm{tr}((L(\mathcal{Y})^{'(i)})^H L(\mathcal{X}^{'(i)}))|\\
     &\leq\sum_{i=1}^{n_3\cdots n_p}\sum_{j=1}^{n_{(2)}}\sigma_j(L(\mathcal{Y})^{'(i)})
     \sigma_j(L(\mathcal{X})^{'(i)})
  \end{split}
  \end{equation}
  where $\sigma_{j}(\cdot)$ denotes the $j$-th biggest singular value of the matrix. We perform the $p$-order t-SVD on $\mathcal{Y}$ and then $\mathcal{Y}=\mathcal{U}_y *_u
  \mathcal{S}_y *_u\mathcal{V}_y^H$.
  Let $\mathcal{U}_x=\mathcal{U}_y,
  \mathcal{V}_x=\mathcal{V}_y$ and choose $\mathcal{S}_x$ to make $\|\mathcal{X}\|\leq 1$. Then by the Theorem \ref{p,tSVD},
  \begin{equation}\label{supattained}
  \begin{split}
    \mathrm{Re}(\langle \mathcal{Y},\mathcal{X}\rangle) &
    =\sum_{i=1}^{n_3\cdots n_p} \mathrm{Re}(\mathrm{tr}( (L(\mathcal{Y})^{'(i)})^H L(\mathcal{X}^{'(i)}))) \\
     & =\sum_{i=1}^{n_3\cdots n_p} \mathrm{Re}(\mathrm{tr}( L(\mathcal{V}_y)^{'(i)}L(\mathcal{S}_y)^{'(i)}(L(\mathcal{U}_y)
     ^{'(i)})^H L(\mathcal{U}_x^{'(i)})L(\mathcal{S}_x^{'(i)})
     (L(\mathcal{V}_x^{'(i)}))^H)) \\
     & =\sum_{i=1}^{n_3\cdots n_p} \mathrm{Re}(\mathrm{tr}( L(\mathcal{V}_y)^{'(i)}L(\mathcal{S}_y)^{'(i)}L(\mathcal{S}_x^{'(i)})
     (L(\mathcal{V}_x^{'(i)}))^H))\\
     &=\sum_{i=1}^{n_3\cdots n_p} \mathrm{Re}(\mathrm{tr}( L(\mathcal{S}_y)^{'(i)}L(\mathcal{S}_x^{'(i)})))
     =\sum_{i=1}^{n_3\cdots n_p} \mathrm{tr}( L(\mathcal{S}_y)^{'(i)}L(\mathcal{S}_x)^{'(i)})\\
     &=\sum_{i=1}^{n_3\cdots n_p} \sum_{j=1}^{n_{(2)}}\sigma_j(L(\mathcal{Y})^{'(i)})
     \sigma_j(L(\mathcal{X})^{'(i)})
  \end{split}
  \end{equation}
  It shows we can obtain the equality. So
  \begin{equation}\label{upsilon}
  \Upsilon^{\#}(\mathcal{Y})=\sup_{\|\mathcal{X}\|\leq 1}(\sum_{i=1}^{n_3\cdots n_p} \sum_{j=1}^{n_{(2)}}\sigma_j(L(\mathcal{Y})^{'(i)})
     \sigma_j(L(\mathcal{X})^{'(i)})-\|\mathbf{rank}_m(\mathcal{X})\|_1),
  \end{equation}
  Because $\|\mathbf{rank}_m(\mathcal{X})\|_1$ is an integer, the range of it is $[0,n_ {(2)}n_3\cdots n_p]$.

  If $\|\mathbf{rank}_m(\mathcal{X})\|_1=0$, then all singular values should be 0, i.e., $\Upsilon^{\#}(\mathcal{Y})=0$.

  Let $n_{(2)} n_3\cdots n_p=N$. For all $1\leq r\leq N$ and an integer $r$, $\|\mathbf{rank}_m(\mathcal{X})\|_1=r$ and $\sigma_ {(i)}(\mathcal{A})$ denotes the $i$-th biggest singular values of all matrix slices of $\mathcal{A}$. So
  \begin{equation}\label{conjugatef1}
  \Upsilon^ {\#}(\mathcal{Y})=\max\{0,\sigma_{(1)}(L(\mathcal{Y}))-1,\cdots,
  \sum_{i=1}^r \sigma_{(i)}(L(\mathcal{Y}))-r\}
  \end{equation}
  Then when $\|\mathcal{Y}\|\leq 1$,$\Upsilon^ {\#}(\mathcal{Y})=0$. If there exists a positive integer $m$, not bigger than $r$, subject to,
  \begin{equation}\label{upsilon1}
  \sigma_{(m)}(L(\mathcal{Y}))>1,\sigma_ {(m)}(L(\mathcal{Y}))\leq 1,
  \end{equation}
  then
  $\Upsilon^ {\#}(\mathcal{Y})=\sum_{i=1}^m \sigma_{(i)}(L(\mathcal{Y}))-m$.
  Consider the conjugate of $\Upsilon^ {\#}(\mathcal{Y})$ further,
  \begin{equation}\label{biconjugate}
  \Upsilon^ {\#\#}(\mathcal{Z})=\sup_{\mathcal{Y}}(\mathrm{Re}(\langle \mathcal{Z},\mathcal{Y}\rangle)-\Upsilon^ {\#}(\mathcal{Y})),
  \end{equation}
  Perform the $p$-order t-SVD on $\mathcal{Z}$ yields that  $\mathcal{Z}=\mathcal{U}_z*_u \mathcal{S}_z *_u \mathcal{V}_z^H$. It is similar to get
  \begin{equation}\label{biconjugate1}
  \Upsilon^ {\#\#}(\mathcal{Z})=\sup_{\mathcal{Y}}(\sum_{i=1}^{n_3\cdots n_p} \sum_{j=1}^{n_{(2)}}\sigma_j(L(\mathcal{Y})^{'(i)})
     \sigma_j(L(\mathcal{Z})^{'(i)})-\Upsilon^ {\#}(\mathcal{Y}))
  \end{equation}
  Then if $\|\mathcal{Z}\|>1$, let the biggest singular value of all matrix slices of $L(\mathcal{Z})$ be on the $k$-th matrix slice. In other words,
  \begin{equation}\label{sigma}
  \sigma_ {(1)}(L(\mathcal{Z}))=\sigma_1(L(\mathcal{Z})^{'(k)}),
  \end{equation}
  Choose $\|\mathcal{Y}\|>1$ to make $\sigma_ {(1)}(L(\mathcal{Y}))=\sigma_1(L(\mathcal{Y})^{'(k)})$
  \begin{equation}\label{biconjugate2}
  \begin{split}
    \Upsilon^ {\#\#}(\mathcal{Z}) & = \sup_{\mathcal{Y}}(\sum_{i=1}^{n_3\cdots n_p} \sum_{j=1}^{n_{(2)}}\sigma_j(L(\mathcal{Y})^{'(i)})
     \sigma_j(L(\mathcal{Z})^{'(i)})-(\sum_{i=1}^{m}
     \sigma_{(i)}(L(\mathcal{Y}))-m))\\
    & = \sup_{\mathcal{Y}}\sigma_{(1)}(L(\mathcal{Y}))
    (\sigma_{(1)}(L(\mathcal{Z}))-1)+
    \sum_{i\neq k}\sum_{j=1}^{n_{(2)}} \sigma_j(L(\mathcal{Y})^{'(i)})\sigma_j(L(\mathcal{Z})^{'(i)})\\
    &+\sum_{j=2}^{n_{(2)}} \sigma_j(L(\mathcal{Y})^{'(k)})\sigma_j(L(\mathcal{Z})^{'(k)}) -(\sum_{i=2}^{m}
     \sigma_{(i)}(L(\mathcal{Y}))-m)
  \end{split}
  \end{equation}
$\sigma_{(1)}(L(\mathcal{Y}))$ can be sufficiently large, so the supremum is infinity. If $\|\mathcal{Z}\|\leq 1$ and $\|\mathcal{Y}\| \leq 1$, we get $\Upsilon^ {\#}(\mathcal{Y})=0$. The supremum is obtained if $\sigma_ {(i)}(L(\mathcal{Y}))=1,i=1,\cdots,N$ so
\begin{equation}\label{biconjugate3}
\Upsilon^ {\#\#}(\mathcal{Z})= \sum_{i=1}^{N} \sigma_i(L(\mathcal{Z}))=\|Z\|_{*,u}
\end{equation}
If $\|\mathcal{Y}\| > 1$, let the singular value $\sigma_ {(i)}(L(\mathcal{Y}))$ of matrix slice of $L(\mathcal{Y})$ correspond to $\sigma_{k(i)}(L(\mathcal{Z}))$ with regard to $\sum_{i=1}^{n_3\cdots n_p}\sum_{j=1}^{n_{(2)}}\sigma_j(L(\mathcal{Y})^{'(i)})
\sigma_j(L(\mathcal{Z})^{'(i)})$, and then
\[
\sum_ {i=1}^{m}(\sigma_{k(i)}(L(\mathcal{Z}))-1)(\sigma_{(i)}(L(\mathcal{Y}))-1)\leq 0,\mbox{and}
\sum_ {i=m+1}^{N}\sigma_{k(i)}(L(\mathcal{Z}))(\sigma_{(i)}
(L(\mathcal{Y}))-1)\leq 0,
\]
so
\begin{equation}\label{endproof}
  \begin{split}
   \sum_{i=1}^{n_3\cdots n_p} & \sum_{j=1}^{n_{(2)}}\sigma_j(L(\mathcal{Y})^{'(i)})
     \sigma_j(L(\mathcal{Z})^{'(i)})-\sum_{i=1}^{m}
     (\sigma_{(i)}(L(\mathcal{Y}))-1)\\
  & = \sum_{i=1}^{m}
     \sigma_{k(i)}(L(\mathcal{Z}))\sigma_{(i)}(L(\mathcal{Y}))+
     \sum_{i=m+1}^{N}\sigma_{k(i)}(L(\mathcal{Z}))\sigma_{(i)}(L(\mathcal{Y}))\\
     &-\sum_{i=1}^{m}
     (\sigma_i(L(\mathcal{Y}))-1)
     -\sum_{i=1}^{N}
     \sigma_{(i)}(L(\mathcal{Z}))+\sum_{i=1}^{N}
     \sigma_{(i)}(L(\mathcal{Z}))\\
  & =\sum_{i=1}^{N}\sigma_{(i)}(L(\mathcal{Z}))+\sum_{i=1}^{m}
     (\sigma_{k(i)}(L(\mathcal{Z}))-1)(\sigma_{(i)}(L(\mathcal{Y}))-1)\\
     &+\sum_{i=m+1}^{N}
     \sigma_{k(i)}(L(\mathcal{Z}))(\sigma_{(i)}(L(\mathcal{Y}))-1)
     \leq \sum_{i=1}^{N}\sigma_{(i)}(L(\mathcal{Z}))
\end{split}
\end{equation}
On the set $\{\|\mathcal{Z}\|\leq 1\}$,$\Upsilon^ {\#\#}(\mathcal{Z})=\sum_{i=1}^{N}\sigma_{(i)}(L(\mathcal{Z}))=
\|\mathcal{Z}\|_{*,u}$. We get the conclusion exactly.
\end{proof}

\section{Conclusion}
In this paper, I propose a generalized kind of $p$-order tensor product using invertible linear transform. Given one transform, we may compute the tensor product efficiently. Also, after defining the tensor nuclear norm under this product and ranks, I prove a theorem to show the relationship between the multi-rank and tensor nuclear norm, which will facilitate the research of $p$-order tensor low-rank recovery. I also leave this direction to the future work. The generalized tensor determinant is defined as a tube. Its meaning needs to be thoroughly investigated in the future. Moreover, it is a little tricky to choose a good transform for a given data set. Given the insight from the theorem in \cite{kernfeld2015tensor}, this topic should be studied deeper. In an operator perspective like \cite{kilmer2013third}, $p$-order tensor products may be analyzed further.

\section*{Acknowledgement}
Thanks for the comment from Prof. Daniel Kuhn, Prof. Michael Ng, Prof. Chunlin Wu and Doc. Dirk Lauinger.

\bibliographystyle{siam}
\bibliography{./reference.bib}

\end{document}